\newtheorem{theorem}{Theorem}[section]
\newtheorem{lemma}[theorem]{Lemma}
\newtheorem{proposition}[theorem]{Proposition}
\newtheorem{corollary}[theorem]{Corollary}
\theoremstyle{definition}
\newtheorem{remark}[theorem]{Remark}
\newtheorem{example}[theorem]{Example}
\newtheorem{definition}[theorem]{Definition}
\numberwithin{equation}{section}
\def\Z{\mathbb Z}
\def\N{\mathbb N}
\def\C{\mathbb C}
\def\Q{\mathbb Q}
\def\Select{\mathrm{Select}}
\newcommand{\decdot}{\raisebox{0.1ex}{\textbf{.}}}
\begin{document}

\title{Two applications of the spectrum of numbers}

\author{Christiane Frougny\footnote{ 
IRIF, UMR~8243 CNRS and Universit\'e Paris-Diderot} 
and Edita Pelantov\'a\footnote{
Doppler Institute for Mathematical Physics and
Applied Mathematics, and
Department of Mathematics, FNSPE,
Czech Technical University in Prague}}
\date{ }

\maketitle

\begin{abstract}
Let the base $\beta$ be a complex number, $|\beta|>1$, and let $A \subset \C$ be a finite alphabet of digits.
The \emph{$A$-spectrum} of $\beta$ is the set
$S_{A}(\beta) = \{\sum_{k=0}^n a_k\beta^k \mid n \in \mathbb{N}, \ a_k \in {A}\}$.
We show that the spectrum $S_{{A}}(\beta)$ has an accumulation point if and only if
$0$ has a particular $(\beta, A)$-representation, said to be \emph{rigid}.

The first application is restricted to the case that $\beta >1 $ and the alphabet is
$A=\{-M, \ldots, M\}$, $M \ge 1$ integer. We show that the set
$Z_{\beta,M}$
of infinite $(\beta, A)$-representations of $0$
is recognizable by a finite B\"uchi automaton if and only if the spectrum $S_A(\beta)$ has no accumulation point.
Using a result of Akiyama-Komornik and Feng, this implies that
$Z_{\beta,  M}$ is recognizable by a finite B\"uchi automaton for any
positive integer $M \ge \lceil \beta \rceil -1$ if and only if
$\beta$ is a Pisot number.
This improves the previous bound $M \ge \lceil \beta \rceil $.

For the second application the base and the digits are complex. We consider the on-line algorithm for division of Trivedi and Ercegovac generalized
to a complex
numeration system. 
In on-line arithmetic the operands and results are processed in a digit serial manner, starting with the most significant digit.
The divisor must be far from $0$, which means that no prefix of the $(\beta,A)$-representation of the divisor can be small.
The 
numeration system $(\beta,A)$ is said to \emph{allow preprocessing} if there exists a finite list of transformations on the divisor
which achieve this task.
We show that $(\beta,A )$  allows preprocessing if and only if
the spectrum $S_{{A}}(\beta)$ has no  accumulation point.

\bigskip

\noindent\textbf{Key words}: spectrum, Pisot number, B\"uchi automaton 

\noindent\textbf{Mathematics Subject Classification}: 11K16, 68Q45
\end{abstract}
\section{Introduction}
 
The so-called {\em
beta-numeration} has been introduced by R\'enyi in \cite{Ren} and studied by Parry in \cite{Parry}
in the case that $\beta$ is a 
real number, $\beta>1$, and since then there are been many works
in this domain, in connection with number theory, dynamical systems, and automata theory, see the survey \cite{FrSak2010}  or more recent \cite{rigo}
for instance.

For $\beta>1$ and $M \ge 1$ an integer, the following spectrum
$$X_M(\beta)=\{ \sum_{k=0}^n a_k \beta^k \mid n \in \N, \; a_k \in \{0, 1,\ldots, M\}\}$$
has been introduced by Erd\H{o}s, Jo\'o and Komornik~\cite{EJK}.

Since $X_M(\beta)$ is discrete its elements can be arranged into an increasing sequence
$$0=x_0<x_1<\cdots$$
Denote $\ell_M(\beta)= \liminf_{k \to \infty} (x_{k+1}-x_{k})$. 
Numerous works have been devoted to the study of this value, see in particular the introduction and the results of \cite{AkiKomornik}.

More generally, let $\beta$ be a complex number, $|\beta|>1$, and let $A \subset \C$ be a finite alphabet of digits.
The \emph{$A$-spectrum} of $\beta$ is the set
$$
S_{A}(\beta) = \Bigl\{\sum_{k=0}^n a_k\beta^k \mid n \in \mathbb{N}, \ a_k \in {A}\Bigr\} .
$$
Recently Feng  answered an open question raised in~\cite{EJK}, see also \cite{AkiKomornik}, on the density of the spectrum of  $\beta$ when $\beta$ is real and the digits are consecutive integers:
 
\begin{theorem} [\cite{Feng}]\label{feng}   Let $\beta >1$ and let $A=\{-M, \ldots,M\}$, $M$ an integer $\ge 1$. 
Then  the spectrum $  S_A({\beta})$ is dense in $\mathbb{R}$ if and only if $\beta < M+1$ and $\beta$ is not a Pisot number. 
\end{theorem}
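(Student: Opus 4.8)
The plan is to isolate a single governing quantity --- whether $0$ is an accumulation point of $Y_d(\beta)$ --- and to prove the two implications separately, treating the two sources of non-density (a large base, and the Pisot property) by two independent discreteness mechanisms. Throughout I would use the self-similar identity $Y_d(\beta)=\{-d,\dots,d\}+\beta\,Y_d(\beta)$ and the symmetry $Y_d(\beta)=-Y_d(\beta)$.

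For the \emph{only if} direction, suppose $Y_d(\beta)$ is dense. To rule out $\beta\ge d+1$ I would use a top-coefficient estimate: if $x=\sum_{k=0}^n a_k\beta^k$ is nonzero with $a_n\neq0$, then $|x|\ge \beta^n|a_n|-d\frac{\beta^n-1}{\beta-1}\ge \beta^n\bigl(1-\tfrac{d}{\beta-1}\bigr)$, which is bounded below by a positive constant exactly when $\beta-1>d$; hence for $\beta>d+1$ the point $0$ is isolated in $Y_d(\beta)$ and the set is not dense, while $\beta=d+1$ is an integer base giving $Y_d(\beta)=\Z$ directly. To rule out $\beta$ Pisot I would invoke Garsia's separation argument: a nonzero difference of two elements equals $P(\beta)$ for an integer polynomial $P$ with coefficients in $\{-2d,\dots,2d\}$; it is a nonzero algebraic integer, its conjugate values $P(\beta_i)$ are bounded by $2d/(1-\max_i|\beta_i|)$ because every $|\beta_i|<1$, and the norm is a nonzero rational integer, so $|P(\beta)|$ is bounded below uniformly. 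Thus $Y_d(\beta)$ is uniformly discrete. Either way, density forces both $\beta<d+1$ and $\beta$ non-Pisot.

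For the \emph{if} direction I would first show that $\beta<d+1$ makes $Y_d(\beta)$ relatively dense (gaps bounded) by a greedy construction: given $t\ge0$ and $\beta^n\le t<\beta^{n+1}$, the digit $a_n=\lfloor t/\beta^n\rfloor$ satisfies $a_n\le d$ since $t/\beta^n<\beta<d+1$, and the remainder is $<\beta^n$; iterating down to $\beta^0$ yields an element of $Y_d(\beta)$ within distance $1$ of $t$, and symmetry covers all of $\R$. It then suffices to produce an accumulation point at $0$ and to propagate it. The accumulation point comes from pigeonhole: the $(d+1)^{n+1}$ values $\{P(\beta):\deg P\le n,\ \text{coefficients in }\{0,\dots,d\}\}$ lie in an interval of length $O(\beta^{n+1})$, so as long as enough of them are \emph{distinct}, two must lie within $O((\beta/(d+1))^n)\to 0$, and since $d+1>\beta$ their difference is a nonzero element of $Y_d(\beta)$ arbitrarily close to $0$.

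The heart of the matter, and where the non-Pisot hypothesis (the content behind Theorem~\ref{feng}) is genuinely needed, is controlling the collisions $P_1(\beta)=P_2(\beta)$ so that the pigeonhole slots are not swamped by coincidences --- for Pisot bases Garsia's bound shows the distinct values stay $\gamma$-separated, which is precisely why density fails there. I would split by the arithmetic type of $\beta$: if $\beta$ is transcendental there are no collisions and the argument is immediate; if $\beta$ is algebraic but not an algebraic integer, the failure of an integer-norm lower bound lets the values (carrying denominators tending to infinity) approach $0$; if $\beta$ is an algebraic integer with a conjugate of modulus $>1$, I would pigeonhole in the joint Galois embedding, using the expanding conjugate to guarantee enough distinct values of $P(\beta)$. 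The genuinely delicate case, which I expect to be the main obstacle, is when $\beta$ is a Salem number: its conjugates on the unit circle neither contract nor expand, so Garsia's bound gives no separation yet the norm is still $\ge 1$, and one must instead exploit equidistribution of the conjugate contributions $P(\beta_i)$ on a torus (the $\beta_i$ being non-roots of unity) to choose coefficients that make $P(\beta)$ small while keeping the conjugate terms bounded. The second point requiring real care is the final propagation: combining relative denseness, the self-similarity $\beta\,Y_d(\beta)\subseteq Y_d(\beta)$, and the existence of elements arbitrarily near $0$ to conclude that the closure has no gaps and hence equals $\R$.
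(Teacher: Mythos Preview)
The paper does not contain a proof of this statement: Theorem~\ref{feng} is quoted as an external result of Feng~\cite{Feng} and is used as a black box in the proof of Proposition~\ref{new}. There is therefore no ``paper's own proof'' to compare your proposal against.

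That said, a brief assessment of your outline on its own terms: the \emph{only if} direction is essentially complete --- the top-coefficient bound and Garsia's separation lemma are exactly the right tools and your write-up is correct. For the \emph{if} direction, your case split by arithmetic type of $\beta$ is the natural organization, and the transcendental and non-integer algebraic cases are indeed routine. You correctly identify the Salem case as the crux; your suggestion to use equidistribution of the unit-modulus conjugates is in the right spirit, but as written it is only a heuristic: turning ``the conjugate contributions equidistribute on a torus'' into a construction of polynomials $P$ with $P(\beta)$ small and all $|P(\beta_i)|$ controlled requires a quantitative argument that you have not supplied, and this is precisely the substantive content of Feng's paper. Similarly, in the case of an algebraic integer with a conjugate of modulus $>1$, ``pigeonhole in the joint Galois embedding'' needs more detail to ensure the resulting difference is nonzero at $\beta$. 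So your proposal is a correct high-level roadmap, but the two places you flag as requiring care are genuine gaps rather than routine verifications.
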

Feng has obtained the following corollary:
$\ell_M(\beta)=0$ if and only if $\beta<M+1$ and $\beta$ is not a Pisot number.

\bigskip

In this paper we use the concept of spectrum of a number to solve two problems arising in beta-numeration.

Let $\beta$ and the digits of $A$ be complex. The topological properties of the spectrum are linked with a particular representation of $0$.
Let $z_1z_2 \cdots$ be a $(\beta, A)$-representation of $0$, that is to say,
$\sum_{i\ge 1}z_i \beta^{-i}=0$. 
It is said to be \emph{rigid} if
$0\decdot z_1z_2 \cdots z_j \neq 0\decdot 0z'_2 \cdots z'_j$ for all $j \ge 2$ and for all $z'_2 \cdots z'_j$
in $A^*$. 
The term ``rigid" comes from the preprocessing motivation, see Section~5.

We first prove that the spectrum $S_{{A}}(\beta)$ has an accumulation point if and only if
$0$ has a rigid $(\beta, A)$-representation, Theorem~\ref{rigid}.

Then we obtain some results when the base is a complex Pisot number, which extend the real case covered by Garsia~\cite{Garsia}.
Let $\beta$ be a complex number, and let $A \subset \Q(\beta)$ containing $0$. 
If $\beta$ is real and if $\beta$ or  $-\beta$ is a Pisot number, 
or if $\beta \in \mathbb{C}\setminus \mathbb{R}$ is a complex Pisot number
then $S_{{A}}(\beta)$ has no accumulation point, Theorem~\ref{pisot}.

\bigskip

The first question we address in this work is the one 
of the recognizability by a finite B\"uchi automaton of the set of infinite $\beta$-representations of $0$
when $\beta$ is a real number and the digits are integer.

The set of infinite $\beta$-representations of $0$ on the alphabet $\{-M, \ldots, M\}$,
$M \ge 1$ integer, is denoted
$$Z_{\beta,M}=\{z_{1} z_{2}\cdots \mid \sum_{i\ge 1}z_i \beta^{-i}=0, \; z_i \in \{-M, \ldots, M\}\}.$$

The following result has been formulated in~\cite{FrSak2010}:
\begin{theorem}\label{origin}
Let $\beta >1 $. The following conditions are equivalent:
\begin{enumerate}
  \item the set $Z_{\beta,  M}$ is recognizable by a finite B\"uchi automaton for every integer 
$M$,
  \item the set $Z_{\beta,  M}$ is recognizable by a finite B\"uchi automaton for one integer 
$M \ge \lceil \beta \rceil$,
  \item $\beta$ is a Pisot number.
\end{enumerate}
\end{theorem}
(3) implies (1) is proved in \cite{Fr1992}, (1) implies (3) is proved in \cite{BeFr1994}
and (2) implies (1) is proved in \cite{FrSak1999}.

Note that in \cite{Bugeaud1996} Bugeaud has shown, using (1) implies (3) of Theorem~\ref{origin}, that if $\beta$ is not a Pisot number then there exists an integer $M$ such that
$\ell_M(\beta)=0$.

In this paper we first prove that 
the set $Z_{\beta,M}$ is recognizable by a finite B\"uchi automaton  if and only if the spectrum $S_{  A }(\beta)$  has no accumulation point, Theorem~\ref{recog2}. 

By \cite{AkiKomornik} or \cite{Feng} it is known that, for $A  =\{ -M, \ldots,M\}$, the spectrum   $S_{  A }(\beta)$ has an accumulation point if and only if   $\beta  < M+1$ and $\beta$ is not Pisot. 

This result together with Theorem~\ref{recog2} proves the conjecture stated  in~\cite{FrSak2010}: \\
If the set  $Z_{\beta, \lceil \beta \rceil -1}$ is recognizable by a finite B\"uchi automaton
then $\beta$ must be a Pisot number.

Moreover we obtain a simpler proof of the implication $(2)\Rightarrow (3)$ of   Theorem~\ref{origin}.
Note that the value  $M= \lceil \beta \rceil-1$  is the best possible as $Z_{\beta, M}$  is reduced to $\{0^\omega\}$ if $M< \lceil \beta \rceil-1$.

\medskip

Normalization in base $\beta$ is the function which maps any $\beta$-representation on the canonical alphabet
$A_\beta=\{0,\ldots,\lceil \beta\rceil-1\}$ of a number $x \in [0,1]$ onto the greedy
$\beta$-expansion of $x$. Since the set of greedy $\beta$-expansions of the elements of $[0,1]$
is computable by a finite B\"uchi automaton when $\beta$ is a Pisot number, see~\cite{BertrandMathis86},
the following result holds true:\\
Normalization in base $\beta>1$ is computable by a finite  B\"uchi automaton
on the alphabet $A_\beta \times A_\beta$ if and only if $\beta$ is a Pisot number.

\bigskip

The second utilisation of the notion of spectrum occurs in the on-line algorithm for 
division in a complex base.

On-line arithmetic, introduced in~\cite{TrivediErcegovac} for an integer base, is a mode of computation where operands and results are processed in a digit serial manner, starting with the most significant digit. To generate the first digit of the result, the first $\delta$ digits of the operands are required. The integer $\delta$ is called the delay of the algorithm.
One of the interests of the functions that are on-line computable is that they are continuous for the usual topology on the set of infinite words on a finite alphabet.

In \cite{BrFrPeSv1, BrFrPeSv2} we have extended the original on-line algorithm of Trivedi-Ercegovac to a complex base.
The algorithm for on-line division in a complex numeration system $(\beta,A)$ has two parameters: the delay $\delta\in \N$ 
and  $D  >0$,  the minimal value (in modulus) of the divisor.

When making division, we need that the divisor stays away from $0$.
By definition of the on-line algorithm, this means that the value of all the prefixes of the divisor
$d_1d_2 \cdots$ must be greater in absolute value than $D>0$, so the divisor must be preprocessed before making the division.

We say that  a complex numeration system $(\beta,A )$  \emph{allows preprocessing}
if there exists a finite list of transformations on the $(\beta,A )$-representation of the divisor
which achieve this task, see Definition~\ref{DefPrepro}.

We show that a complex numeration system $(\beta,A )$  allows preprocessing if and only if
the spectrum $S_{{A}}(\beta)$ has no  accumulation point, Theorem~\ref{prepro}.

\section{Preliminaries}

\subsection{Words and automata}
Let $A$ be a finite alphabet. A \emph{finite word} $w$ on $A$ is a finite concatenation of letters from
$A$, $w=w_1 \cdots w_n$ with $w_i$ in $A$.
The set of all finite
words over $A$ is denoted by $A^*$.
An  \emph{infinite word} $w$ on $A$ is an infinite concatenation of letters from
$A$, $w=w_1 w_2\cdots $ with $w_i$ in $A$.
The set of all infinite
words over $A$ is denoted by $A^\N$. The infinite concatenation $uuu\cdots$ is noted $u^\omega$.
If $w=uv$, $u$ is a \emph{prefix} of $w$.

An \emph{automaton} $\mathcal{ A}= (A, Q, I, T)$ over the alphabet $A$ is a directed graph labeled by letters of
$A$, with a denumerable set $Q$ of vertices called \emph{states}. $I \subseteq Q$ is the set of \emph{initial}
states, and $T \subseteq Q$ is the set of \emph{terminal} states. The automaton is said to be \emph{finite}
if the set of states $Q$ is finite.

An infinite path of $\mathcal{ A}$ is said to be \emph{successful} if it starts in $I$ and goes infinitely often through $T$.
The set of infinite words \emph{recognized} by $\mathcal{ A}$ is the set of labels of its successful infinite paths.
An automaton used to recognize infinite words in this sense is called a \emph{B\"uchi automaton}.

\subsection{Numeration}\label{betanum}

Let $\beta$ be a complex number, $|\beta|>1$, and let $A \subset \C$ be a finite set, the alphabet of digits.
We say that $(\beta, A)$ is a \emph{numeration system}.
A \emph{$(\beta,A)$-representation} of a number $z$ is an infinite word $z_1z_2 \cdots$ such that $z=\sum_{i=1}^{+\infty} z_i \beta^{-i}$ with $z_i$ in $A$. It should be noted that here we do not make any hypothesis on the fact that every complex number has, 
or does not have, a $(\beta,A)$-representation. This is a difficult problem, studied by many authors, see the pioneering works of Knuth~\cite{Knuth}, K\'atai and Kov\'acs~\cite{KataiKovacs}, Gilbert~\cite{Gilbert}, Thurston~\cite{Thurston} for instance.

\medskip

We now recall some definitions and results on the so-called {\em
beta-numeration}, see~\cite{FrSak2010} or~\cite{rigo} for a survey.  Let $\beta>1$ be a real number.  Any real number
$x\in[0,1]$ can be represented by a greedy algorithm as
$x=\sum_{i=1}^{+\infty} x_i \beta^{-i}$ with $x_i$ in the \emph{canonical alphabet} $A_\beta
=\{0,\ldots,\lceil \beta\rceil-1\}$ for all $i\ge 1$. The greedy
sequence $(x_i)_{i\ge 1}$ corresponding to a given real number $x$ is
the greatest in the lexicographical order, and is said to be the {\em
  $\beta$-expansion} of $x$, see \cite{Ren}. It is denoted by
$d_\beta(x)= (x_i)_{i\ge 1}$.  When the expansion ends in infinitely
many $0$'s, it is said to be {\em finite}, and the $0$'s are omitted.

The greedy $\beta$-expansion of $1$ is denoted $d_\beta(1)=(t_i)_{i \ge 1}$. 
When it is finite, of the form 
$d_\beta(1)=t_1 \cdots t_m$, the {\em quasi-greedy} $\beta$-expansion of $1$ is defined
as $d_\beta^*(1)=(t_1 \cdots t_{m-1}(t_{m}-1))^\omega$.
If it is infinite, set $d_\beta^*(1)=d_\beta(1)$.
The sequence
$d^*_{\beta}(1)$ is the lexicographically greatest infinite representation of 1 in the base $\beta$ and the alphabet $\mathbb{N}$.  
It is known from~\cite{Parry} that a sequence of integers $x_1x_2 \cdots$ is the greedy $\beta$-expansion of some $x$
from $[0,1]$ if and only if, for all $j \ge 1$, $x_jx_{j+1} \cdots$ is less than or equal
to $d^*_{\beta}(1)$ in the lexicographic order.

\medskip

{\noindent}Notation: The numerical value
$y_{m-1} \beta^{m-1} +\cdots +y_0 + y_{-1}\beta^{-1} + y_{-2}\beta^{-2} +\cdots$
is denoted by $y_{m-1} \cdots y_0 \decdot y_{-1}y_{-2}\cdots$.

\subsection{Numbers}

A number $\beta>1$ such that $d_\beta(1)$ is eventually periodic is a \emph{Parry number}.
It is a \emph{simple} Parry number if $d_\beta(1)$ is finite.

A \emph{Pisot number} is an algebraic integer greater than $1$ such that
all its Galois conjugates have modulus less
than $1$. Every Pisot number is a Parry number, see~\cite{Bertrand77} and~\cite{Schmidt}.

A \emph{complex Pisot number} is an algebraic integer $\beta$ such that $|\beta|>1$ and such that
all its Galois conjugates different from its complex conjugate $\overline{\beta}$ have modulus less
than $1$.
\section{Spectrum and rigid representation of $0$}

Let $\beta$ be a complex number, $|\beta|>1$, and let $A \subset \C$ be a finite alphabet.
We introduce the ${A}$-spectrum  of $\beta$ as
$$
S_{{A}}(\beta) = \Bigl\{\sum_{k=0}^n a_k\beta^k \mid n \in \mathbb{N}, \ a_k \in {A}\Bigr\} .
$$

The topological properties of $S_{A}(\beta)$ are linked with a particular representation of $0$.

\begin{definition}\label{RigidZero}
Let $z_1z_2 \cdots$ be a $\beta$-representation of $0$ on $A$, that is to say,
$\sum_{i\ge 1}z_i \beta^{-i}=0$. 
It is said to be \emph{rigid} if
$0\decdot z_1z_2 \cdots z_j \neq 0\decdot 0z'_2 \cdots z'_j$ for all $j \ge 2$ and for all $z'_2 \cdots z'_j$
in $A^*$.
\end{definition}

\begin{example}The signed digit $(-1)$ is denoted $\overline{1}$.
In base $2$ with alphabet $\{\overline{1}, 0,1\}$, $0$ has
two representations, namely $0=0\decdot 1\overline{1}\,\overline{1}\,\overline{1}\,\overline{1}\cdots = 0\decdot  \overline{1}\,1\,1\,1\,1\cdots $.
They are not rigid, since $0 \decdot 1\overline{1} = 0\decdot 01$ and  $0 \decdot \overline{1}1 = 0\decdot 0\overline{1}$.
\end{example}

\begin{definition}\label{tail}
Let $z_1z_2 \cdots$ be a $(\beta,A)$-representation of $0$.
For $n$ in $\N$, its \emph{$n$-th tail} is
$r_n = 0\decdot z_{n+1}z_{n+2}z_{n+3}\cdots$.
\end{definition}

\begin{lemma}\label{aux} Let $z_1z_2 \cdots$ be a $(\beta,A)$-representation of $0$. 
\begin{enumerate}
\item If the  sequence $(r_n)_{n \in \N}$ is injective, then the spectrum $S_{  A }(\beta)$   has an accumulation point. 
\item  If  the representation  of $0$ is rigid, then  the sequence $(r_n)_{n \in \N}$ is injective. 
\end{enumerate}
\end{lemma}

\begin{proof} 
Since $0=0\decdot z_1z_2z_3\cdots$,   the  $n^{th}$ tail $r_n= \sum\limits_{k=1}^{+\infty} z_{n+k}\beta^{-k} = -\sum\limits_{k=0}^{n-1} z_{n-k}  \beta^k  $. It means that $-r_n$ belongs to the spectrum  $S_{  A }(\beta)$ and moreover $$|r_n|\leq \frac{\alpha}{|\beta|-1}, \quad \text{ where }  \ \alpha=\max \{|a|\,:\, a\in A\}.$$ 

1) \ If the sequence $(r_n)_{n \in \N}$ is injective, then  the ball centered at  $0$  with radius  $\frac{\alpha}{|\beta|-1} $ contains infinitely many elements $(-r_n)$  of the spectrum, and thus the spectrum has an accumulation point.

2) \ Suppose that the representation  of $0$ is rigid.  We show by contradiction  the  injectivity of $(r_n)_{n \in \N}$. Let us assume that  $r_i=r_j$ for some indices $i<j$.  Then    
$\sum_{k=0}^{j-1} z_{j-k}  \beta^k = \sum_{k=0}^{i-1} z_{i-k}  \beta^k$ and thus 
$0\decdot z_1z_2 \cdots z_j =0\decdot \underbrace{0\cdots 0}_{(j-i)\,\text{times}} z_1 \cdots z_i$ ---  a contradiction with the rigidity of the representation of zero.
\end{proof}

\begin{theorem}\label{rigid}
Let $\beta$ be a complex number, $|\beta|>1$, and let $A \subset \C$ be a finite alphabet.
The spectrum $S_{{A}}(\beta)$ has an accumulation point if and only if
$0$ has a rigid $(\beta, A)$-representation.
\end{theorem}

\begin{proof}  $(\Rightarrow)$ \ \ 
 Let $s$ be an accumulation point of $S_{  A }(\beta)$.  There exists an injective sequence $(x^{(n)})_{n \in \N}$  of points from  $S_{  A }(\beta)$ such that $ \lim (x^{(n)})_{n \in \N}= s $. 
 For any $x\in  S_{  A }(\beta)$ denote $$\rho(x) = \min\{n \in \mathbb{N} : x = \sum_{k=0}^n a_k\beta^k , \text{\ with\ } a_k \in   A \}.$$  
Set $\rho_n =  \rho(x^{(n)}) $, then $x^{(n)}= \sum_{k=0}^{\rho_n} x_k^{(n)}\beta^k  $. The sequence  $(\rho_n)_{n \in \N}$  is unbounded, as  there exists only a finite number of strings of a given length over a finite alphabet.    Without loss of generality assume that $(\rho_n)_{n \in \N}$ is strictly increasing.    Clearly,
\begin{equation}\label{toZero}
\frac{x^{(n)}}{\beta^{1+\rho_n}} = 0\decdot x_{\rho_n }^{(n)} \cdots x_2^{(n)}x_1^{(n)} x_{0 }^{(n)}0000\cdots  \to 0
\end{equation}
 since the nominators tend to $s$.
The fact that $  A ^\mathbb{N}$ endowed with the product topology is a  compact  space implies the
existence of a string $x_1x_2x_3 \cdots $ which is the limit of a subsequence   of  $(x_{\rho_n }^{(n)} \cdots x_2^{(n)}x_1^{(n)} x_{0 }^{(n)}0^\omega)_{n \in \N}$. 
It means that for any $N\in \mathbb{N}$  one can find $n \in \mathbb{N}$ such that   $\rho_n >N$  and $x_{\rho_n }^{(n)} \cdots x_2^{(n)}x_1^{(n)} x_{0 }^{(n)}$ is a prefix of $x_1x_2x_3 \cdots $.  The definition of $\rho_n$  and the fact \eqref{toZero} forces  $0\decdot x_1x_2x_3 \cdots $ to be a rigid representation of 0.

\medskip

$(\Leftarrow)$ \ \   Let $0 = 0\decdot z_1z_2 z_3\cdots $ be a rigid representation of zero. 
Then by  Point 2 of Lemma \ref{aux}, the sequence of its  tails is injective and by Point 1 of the same lemma, the spectrum has an accumulation point.   
\end{proof}

We now turn to the Pisot case. The real case is due to Garsia~\cite{Garsia}, and we follow his idea.

\begin{theorem}\label{pisot}
Let $\beta$ be a complex number, $|\beta|>1$, and let $A \subset \Q(\beta)$ containing $0$. 
\begin{enumerate}
  \item If $\beta$ is real and if $\beta$ or  $-\beta$ is Pisot
  \item or if $\beta \in \mathbb{C}\setminus \mathbb{R}$ is complex Pisot
\end{enumerate}
then $S_{{A}}(\beta)$ has no accumulation point.
\end{theorem}

\begin{proof}
Let $\beta = \beta_1$  be a complex Pisot number of degree $r$ with conjugates $\beta_2=\overline{\beta_1}, \beta_3, \ldots, \beta_r$, i.e. $|\beta_k|<1$ for $k=3,4,\ldots, r$. We denote $\sigma_k: \mathbb{Q}(\beta_1) \to \mathbb{Q}(\beta_k)$ the isomorphism induced by $\beta_1 \mapsto \beta_k$.  As $  A $ is finite there exists $q \in \mathbb{N}$ such that  $q  A $ belongs to the ring of integers of the field $\mathbb{Q}(\beta)$.  In particular, the norm $N(qa) = q^r \prod_{k=1}^r|\sigma_k(a)|$ is 
an integer for any letter $a$ in $A$.  

Consider $ x, y \in  S_{  A }(\beta)$, $x\neq y$.  Then the difference between $x$ and $y$ can be expressed 
as $x-y=v = \sum_{j=0}^n b_j\beta^j$, for some $n$ in $\N$ and $b_j$ in   $A  -  A $.

Let us denote $A_k = \max\{|\sigma_k(a)|: a\in   A \}$.  
For $k=3,4,\ldots,r$, the modulus of the $k$-th conjugate of $v$  satisfies 
$$|\sigma_k(v)| \leq \sum_{j=0}^n |b_j|.|\beta_k|^j \leq 2A_k\sum_{j=0}^\infty|\beta_k|^j = 2A_k\tfrac{|\beta_k|}{1-|\beta_k|}.$$
 Since $\beta$ and $qb_k$ are algebraic integers, $qv$ is an
algebraic integer as well and its norm is  a rational non-zero integer.  Compute  the norm  of $qv$
$$
1\leq |N(qv)| = q^r\prod_{k=1}^r|\sigma_k(v)| \leq q^r  v \,\overline{v}\,\prod_{k=3}^r|\sigma_k(v)|\leq  (2q)^r v\overline{v}\prod_{k=3}^r \frac{A_k|\beta_k|}{1-|\beta_k|} \,.
$$
It means that the squared distance $v\overline{v}$ of two different points from the spectrum  $S_{  A }(\beta)$ is bounded from below  by the constant  $(2q)^{-r}\prod_{k=3}^r \frac{1-|\beta_k|}{A_k|\beta_k|}$\,. Consequently, the spectrum has no accumulation point.

The case $\beta$ real is analogous. 
\end{proof}

If  the base $\beta$  is real and  the alphabet is a  symmetric set of consecutive integers,   Theorem~\ref{rigid}  together with  the following  theorem  answers completely  the question of the existence of a rigid representation of zero.   

\begin{theorem}[Akiyama and  Komornik \cite{AkiKomornik}, Feng \cite{Feng}]\label{Aki} 
Let $\beta>1$ and let
$A  =\{ -M, \ldots,M\}$. Then   $S_{  A }(\beta)$ has an accumulation point if and only if   $\beta  < M+1$ and $\beta$ is not Pisot. 
\end{theorem}

If the base $\beta$ is real but the alphabet is not symmetric we have only the following  partial observation.

\begin{proposition}\label{non-sym}
Let $\beta >1$ and  $\{-1,0,1\} \subset   A  =  \{m,\ldots , 0,\ldots,  M\}\subset \mathbb{Z}$. 
\begin{enumerate}
\item Zero  has a non-trivial $(\beta,   A )$-representation if and only if
$\beta \leq \max \{M+1, -m+1\}\,.$
\item If  $\beta \leq\max \{M+1, -m+1\}\,, $ and $\beta$ is not a Parry number, then  zero  has a rigid $(\beta,   A )$-representation.
\end{enumerate}
\end{proposition}

\begin{proof}  Let 
Let  $d_\beta(1)=t_1t_2t_3\cdots $ be the greedy expansion of $1$. Then $\beta -1\leq t_1 < \beta$,  $t_i\leq t_1$  and 
  $$0=0\decdot\overline{1}t_1t_2t_3\cdots =0\decdot 1\overline{t_1}\,\overline{t_2}\,\overline{t_3}\cdots$$
 We have  two non-trivial representations of $0$
 over the alphabets  $\{- \lceil \beta\rceil+1, \ldots, \overline{1},0,1\}$ and 
$\{\overline{1}, 0,1, \ldots, \lceil \beta\rceil-1 \}$ respectively.

Therefore, if  $\{-1, 0,1, \ldots,  t_1 \}\subset   A $ or $\{  - t_1,\ldots, -1, 0,1,\}\subset   A  $, zero has  a non-trivial   $(\beta,   A )$-representation.  Let us note that $ t_1 \in   A $ means $M\geq  t_1\geq \beta- 1$.   Similarly  $- t_1 \in   A $  implies  $m\leq - t_1\leq -\beta+1$. 

 On the other hand, let $M < \beta-1$ and  $m  > -\beta +1$.   Then  
for  $z = \sum_{k\geq 1} z_i\beta^{-i}$ with $z_i \in   A $ and  $z_1\geq 1$, we have 
$z\geq \tfrac1{\beta}+\sum_{i\geq 2}\tfrac{m}{\beta} =  \tfrac{\beta-1+m}{\beta(\beta -1)} >0$.  Analogously, if $z_1\leq -1$, then $z<0$. Consequently, $0$ has only the trivial representation.  
\medskip

Now assume that $\beta$ is not a Parry number.  Then the sequence of the $n^{th}$ tails of the
$\beta$-expansion of $1$, $r_n=0\decdot t_{n+1}t_{n+2}\cdots$,  is injective. By Lemma~\ref{aux} and Theorem~\ref{rigid}, zero has a rigid $(\beta,A )$-representation. 
\end{proof}

\begin{remark} A numeration system with negative base $-\beta<-1$ and an alphabet $A_{-\beta} = \{0,\ldots, \lfloor \beta \rfloor\}$  was introduced by  Ito and Sadahiro in~\cite{ItSa}.  Liao and Steiner in~\cite{{LiaoSt}} defined  an Yrrap number
 as an analogy of a Parry number for numeration systems with negative base.  This definition implies that if $\beta$ is not Yrrap, then there exists  a rigid  $(-\beta, A)$-representation of $0$ 
over the alphabet $A=\{1,\ldots, \lfloor \beta \rfloor +1\} $. 
\end{remark}


\section{A problem in automata theory}

\subsection{Representations of $0$}

Let $\beta$ be a real number $>1$
We consider infinite 
$\beta$-representations of $0$ on an alphabet of the form $\{-M, \ldots, M\}$, $M\ge 1$ integer. 
Let
$$Z_{\beta,M}=\{z_{1} z_{2}\cdots \mid \sum_{i\ge 1}z_i \beta^{-i}=0, \; z_i \in \{-M, \ldots, M\}\}$$
be the set of infinite words having value $0$ in base $\beta$ on the alphabet $\{-M, \ldots, M\}$.  

Proposition~\ref{non-sym} says that  $0$  has a non-trivial representation only if  $M\geq \lceil\beta\rceil -1$. Therefore, we consider only $M$ satisfying this inequality.

Note that, if $Z_{\beta,  M}$ is recognizable by a finite B\"uchi automaton, then, for every
 $c<M$, $Z_{\beta,  c}=Z_{\beta,  M} \cap \{-c, \ldots, c\}^\N$ is recognizable by a finite B\"uchi automaton as well.
 
\medskip

We briefly recall the construction of the (not necessarily finite) B\"uchi automaton recognizing $Z_{\beta,  M}$, see \cite{Fr1992} and
\cite{FrSak2010}:
\begin{itemize}
  \item the set of states is $Q_M \subset \{ \sum_{k=0}^n a_k \beta^k \mid n \in \N, \; a_k \in \{-M, \ldots, M\}\} \cap [-\frac{M}{\beta-1},\frac{M}{\beta-1}]$
  \item for $s, t \in Q_M$,  $a \in \{-M, \ldots, M\}$ there is an edge 
$$s \stackrel{a}{\longrightarrow} t \iff
t=\beta s +a$$
 \item the initial state is $0$
 \item all states are terminal.
  \end{itemize}

\begin{example}
Take $\beta=\varphi=\frac{1+ \sqrt{5}}{2}$ the Golden Ratio. It is a Pisot number, with $d_\varphi (1)=11$.
A finite B\"uchi automaton recognizing $Z_{\varphi,1}$ is designed in Figure~\ref{zer-aut-phi}.
The initial state is $0$, and all the states are terminal. 

\begin{figure}[h]
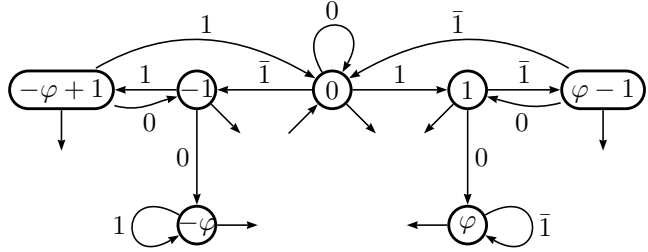

\begin{center}
\VCDraw{%
\begin{VCPicture}{(-5,-2.8)(4,2)}
    \MediumState
\State[0]{(0,0)}{A}
\Initial[sw]{A}
\State[1]{(3,0)}{B}
\State[\varphi]{(3,-3)}{F}
\State[-\varphi]{(-3,-3)}{G}
\StateVar[\varphi-1]{(6,0)}{C}
\State[-1]{(-3,0)}{D}
\StateVar[-\varphi+1]{(-6,0)}{E}
\Final[se]{A}
\Final[se]{D}
\Final[sw]{B}
\Final[s]{C}
\Final[s]{E}
\Final[e]{G}
\Final[w]{F}
\EdgeL[.5]{A}{B}{1}
\EdgeL[.5]{B}{F}{0}
\EdgeR[.5]{D}{G}{0}
\EdgeL[.5]{B}{C}{\bar 1}
\EdgeR[.5]{A}{D}{\bar 1}
\EdgeR[.5]{D}{E}{1}
\LoopN[.5]{A}{0}

\LoopW[.5]{G}{1}
\LoopE[.5]{F}{\bar 1}
\ArcL[.5]{C}{B}{0}
\ArcR[.5]{E}{D}{0}
\LArcR[.5]{C}{A}{\bar 1}
\LArcL[.5]{E}{A}{1}
\end{VCPicture}%
     }
\end{center}
\caption{Finite B\"uchi automaton recognizing $Z_{\varphi,1}$ for $\varphi=\frac{1 +
\sqrt{5}}{2}$.}
\label{zer-aut-phi}
\end{figure}
\end{example}

\begin{theorem}\label{recog2} 
Let $\beta >1$ and $A = \{-M, \ldots, M\}$ with $M$ a fixed integer $\ge 1$.  
The set $Z_{\beta,M}$ is recognizable by a finite B\"uchi automaton  if and only if the spectrum $S_{  A }(\beta)$  has no accumulation point. 
\end{theorem}

\begin{proof}  To any string  $z=z_{1} z_{2}\cdots \in Z_{\beta,M}$ we assigne the sequence of polynomials 
$P^{(z)}_n(X) = z_1X^{n-1}+z_2X^{n-2}+ \cdots + z_{n-1}X + z_{n}$.  
Denote $R^{(z)}_n$ the remainder  of the Euclidean division of the polynomial $ P^{(z)}_n(X)$   by the polynomial $(X-\beta)$.  
It means that  there exists a polynomial $Q^{(z)}_n(X)$ such that $P^{(z)}_n(X) = (X-\beta)Q^{(z)}_n(X) +R^{(z)}_n$. Clearly $P^{(z)}_n(\beta) = R^{(z)}_n$.   Denote $R=\{ R^{(z)}_n : z \in Z_{\beta,M} \ \text{and}\  n \in \mathbb{N}\}$ .

 As  $z=z_{1} z_{2}\cdots $ is a $(\beta,A)$-representation of $0$, the value $P^{(z)}_n(\beta) =- 0\decdot z_{n+1}z_{n+2}\cdots$ belongs to the spectrum  $S_{ A }(\beta)$  and $-P^{(z)}_n(\beta)$  is the $n^{th}$ tail $r_n$ of the $(\beta,A)$-representation of $0$.  Consequently, 
\begin{equation}\label{recog}R\subset S_{A }(\beta) \quad \text{ and } \quad  R\ \ \text{ is bounded}.
\end{equation} 

To prove the theorem, we apply  Proposition 3.1 from~\cite{Fr1992}. It says that  $Z_{\beta,M}$ is
recognizable by a finite B\"uchi automaton if and only if  the set $R$ is finite.

$(\Leftarrow)$  \ \  If $Z_{\beta,M}$ is not 
recognizable by finite automaton, then $R$ is infinite and by \eqref{recog}   the spectrum has an accumulation point. 

$(\Rightarrow)$  \ \ If $S_{  A }(\beta)$  has an accumulation point, then by Theorem \ref{rigid},  zero  has a rigid representation  $z_{1} z_{2}\cdots  \in Z_{\beta,M}$ . By Point 2 of  Lemma \ref{aux}, the sequence  of  its tails  $(r_n)$ is injective. Since $-r_n=P^{(z)}_n(\beta)= R^{(z)}_n\in R$, the set $R$ is not finite and therefore  $Z_{\beta,M}$ is not 
recognizable by finite automaton. 
\end{proof}

Combining Theorems  \ref{Aki} and \ref{recog2},  we 
 answer a conjecture raised in \cite{FrSak2010} and obtain the following result.
\begin{theorem}\label{final}
Let $\beta>1$. The following conditions are equivalent:
\begin{enumerate}
  \item the set $Z_{\beta,  M}$ is recognizable by a finite B\"uchi automaton for every
posi\-tive integer $M$,
  \item the set $Z_{\beta,  M}$ is recognizable by a finite B\"uchi automaton for one
$M \ge \lceil \beta \rceil -1$,
  \item $\beta$ is a Pisot number.
\end{enumerate}
\end{theorem}

\begin{remark}
The fact that, if $\beta$ is not a Pisot number, then the set
$Z_{\beta,M}$ is not
recognizable by a finite B\"uchi automaton for any $M \ge \lceil \beta \rceil$ was already settled in Theorem~\ref{origin}, but the proof given above
is more direct than the original one.
\end{remark}



\subsection{Normalization}

Normalization in base $\beta$ is the function which maps a $\beta$-representation on the canonical alphabet
$A_\beta=\{0,\ldots,\lceil \beta\rceil-1\}$ of a number $x \in [0,1]$ onto the greedy
$\beta$-expansion of $x$. From the B\"uchi automaton $\mathcal{Z}$ recognizing the set of
representations of $0$ on the alphabet $\{-\lceil \beta\rceil+1, \ldots, \lceil \beta\rceil-1\}$, one constructs a 
B\"uchi automaton (a converter) $\mathcal{C}$ on the alphabet $A_\beta \times A_\beta$ that recognizes the set of couples on $A_\beta$
that have the same value in base $\beta$, as follows:
$$s \stackrel{(a,b)}{\longrightarrow} t \; \;  \textrm{in } \mathcal{C} \iff
s \stackrel{a-b}{\longrightarrow} t \; \;  \textrm{in } \mathcal{Z},$$
see~\cite{FrSak2010} for details.
Obviously $\mathcal{C}$ is finite if and only if $\mathcal{Z}$ is finite.

Then we take the intersection of the set of second components with
the set of greedy $\beta$-expansions of the elements of $[0,1]$, which
is recognizable by a finite B\"uchi automaton when $\beta$ is a Pisot number, see~\cite{BertrandMathis86}.
Thus
the following result holds true.

\begin{corollary}
Normalization in base $\beta>1$ is computable by a finite  B\"uchi automaton
on the alphabet $A_\beta \times A_\beta$ if and only if $\beta$ is a Pisot number.
\end{corollary}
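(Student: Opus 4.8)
The plan is to establish the two implications separately. The implication ``$\beta$ a Pisot number $\Rightarrow$ normalization computable by a finite B\"uchi automaton on $A_\beta\times A_\beta$'' is essentially the construction recalled just above, so I would only have to assemble its pieces; the reverse implication is where Proposition~\ref{new} enters, via a transfer of recognizability from normalization back to $Z_{\beta,\lceil\beta\rceil-1}$.

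First, assume $\beta$ is a Pisot number. By Theorem~\ref{origin} the set $Z_{\beta,\lceil\beta\rceil-1}$ of $\beta$-representations of $0$ on $\{-\lceil\beta\rceil+1,\ldots,\lceil\beta\rceil-1\}$ is recognized by a finite B\"uchi automaton $\mathcal{Z}$. Relabelling each edge $s \stackrel{c}{\longrightarrow} t$ of $\mathcal{Z}$ by all pairs $(a,b)\in A_\beta\times A_\beta$ with $a-b=c$ gives a finite B\"uchi automaton $\mathcal{C}$ on $A_\beta\times A_\beta$ that recognizes exactly the couples $(u,u')$ with $0\decdot u=0\decdot u'$. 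Since a Pisot number is a Parry number, the set $G$ of greedy $\beta$-expansions of the numbers of $[0,1]$ is recognized by a finite B\"uchi automaton~\cite{BertrandMathis86}; intersecting $\mathcal{C}$ with the requirement that the second component belong to $G$ — an operation under which finite-B\"uchi-recognizable relations are closed — produces a finite B\"uchi automaton whose successful paths are precisely the couples $(u,d_\beta(0\decdot u))$ with $0\decdot u\in[0,1]$, that is, a converter computing normalization.

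For the converse, from a finite B\"uchi automaton computing normalization I would recover one recognizing $Z_{\beta,\lceil\beta\rceil-1}$ and then invoke Proposition~\ref{new} in contrapositive form. So suppose $\mathcal{N}$ is a finite B\"uchi automaton on $A_\beta\times A_\beta$ recognizing the graph $\{(u,d_\beta(0\decdot u))\mid u\in A_\beta^\N,\ 0\decdot u\in[0,1]\}$ of normalization. Two words $u,u'\in A_\beta^\N$ satisfy $0\decdot u=0\decdot u'\in[0,1]$ if and only if they have the same normalized form, so the relation $E=\{(u,u')\mid 0\decdot u=0\decdot u'\in[0,1]\}$ is recognized by the automaton that reads a couple $(u,u')$, guesses letter by letter a word $v$, and accepts when $(u,v)$ and $(u',v)$ are both accepted by $\mathcal{N}$; by the usual product construction for intersection of B\"uchi automata this is a finite B\"uchi automaton. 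Applying the letter-to-letter morphism $(a,b)\mapsto a-b$ turns $E$ into a finite-B\"uchi-recognizable language $L\subseteq Z_{\beta,\lceil\beta\rceil-1}$. This inclusion can be strict: writing a representation $z$ of $0$ as a difference $u-u'$ of words of $A_\beta^\N$ forces $0\decdot u=0\decdot u'=\sum_{z_i>0}z_i\beta^{-i}$, which may exceed $1$; but prepending $k$ zeros divides this value by $\beta^k$, so $0^kz\in L$ for $k$ large enough. Hence $Z_{\beta,\lceil\beta\rceil-1}=\bigcup_{k\ge0}(0^k)^{-1}L$, a finite union (a finite automaton has finitely many left quotients by the words $0^k$) of finite-B\"uchi-recognizable languages, and therefore finite-B\"uchi-recognizable. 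By Proposition~\ref{new}, $\beta$ is a Pisot number.

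Most of this is bookkeeping with standard closure properties of finite B\"uchi automata: edge relabelling, intersection, composition of letter-to-letter relations, and left quotients. The one genuinely delicate point, which I expect to be the main obstacle, is the gap described in the converse: the difference of two representations on $A_\beta$ does not sweep out all of $Z_{\beta,\lceil\beta\rceil-1}$, only those representations of $0$ whose positive part $\sum_{z_i>0}z_i\beta^{-i}$ is at most $1$, and this defect must be absorbed by the leading-zero quotient before Proposition~\ref{new} can be applied.
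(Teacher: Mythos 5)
Your proof is correct and follows the same route as the paper: reduce normalization to the recognizability of $Z_{\beta,\lceil\beta\rceil-1}$ via the digit-difference correspondence, quote Bertrand--Mathis for the greedy expansions, and invoke Proposition~\ref{new} for the converse. Your forward direction is exactly the paper's converter construction. Where you genuinely add something is the converse: the paper disposes of it with the single remark that the converter $\mathcal{C}$ is finite if and only if the canonical automaton $\mathcal{Z}$ is, which is an observation about one specific pair of automata sharing a state set, whereas the hypothesis to be used is that \emph{some} finite B\"uchi automaton computes normalization. Your argument -- recover the same-value relation as $\mathcal{N}\circ\mathcal{N}^{-1}$ by the product construction, push it through $(a,b)\mapsto a-b$, and then absorb the defect that differences of pairs with common value in $[0,1]$ only yield the $z\in Z_{\beta,\lceil\beta\rceil-1}$ with $\sum_{z_i>0}z_i\beta^{-i}\le 1$ by taking left quotients by $0^k$ -- fills exactly the gap the paper leaves implicit, and the leading-zero point is a real issue that needed addressing (in fact a single $k_0$ with $\beta^{k_0}\ge(\lceil\beta\rceil-1)/(\beta-1)$ suffices, so the union over $k$ is not even needed). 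The cost of your extra care is only the standard closure properties of finite B\"uchi automata, all of which hold.
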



\section{On-line division in complex base}

\subsection{Trivedi-Ercegovac algorithm}

On-line arithmetic, introduced in~\cite{TrivediErcegovac}, is a mode of computation where operands and results are processed
 in a digit serial manner, starting with the most significant digit. To generate the first digit of the result, the first $\delta$ digits of the operands are required. The integer $\delta$ is called the delay of the algorithm.

In \cite{BrFrPeSv1, BrFrPeSv2} we have extended the original on-line algorithm of Trivedi-Ercegovac to the complex case.

The algorithm for on-line division in a complex numeration system $(\beta,A)$ has two parameters: the delay $\delta\in \N$ and  $D  >0$,  the minimal value (in modulus) of the divisor. 

The $(\beta,A)$-representation of the nominator is $n = \sum_{i=1}^\infty n_i \beta^{-i}$, of the divisor is $d = \sum_{i=1}^\infty d_i \beta^{-i}$, and of their quotient $q = \sum_{i=1}^\infty q_i \beta^{-i}$. Partial sums are denoted by $N_k =\sum_{i=1}^k n_i \beta^{-i}$, $D_k = \sum_{i=1}^k d_i \beta^{-i}$, and $Q_k = \sum_{i=1}^k q_i \beta^{-i}$.

The inputs of the algorithm are two infinite strings
$
0\decdot n_1 n_2 \cdots n_\delta n_{\delta +1} n_{\delta +2} \cdots $ with  $n_i \in A$  and $n_1 = n_2= \cdots = n_\delta = 0$  and
$    0\decdot d_1 d_2  \cdots $ with $d_i \in A$ satisfying $|D_j| \geq D $ for all $j \in \N$, $j\geq 1$.

The output is a string $ q_1 q_2 q_3 \cdots$ corresponding to a $(\beta,A)$-represen\-tation of the quotient $q=n/d= 0\decdot q_1 q_2 q_3 \cdots$. The settings of the algorithm ensure that the representation of $q$ starts behind the fractional point.

Set $W_0 = q_0 = Q_0 = 0$. Then, for $k \ge 1$ compute
$$    W_{k} = \beta (W_{k-1} - q_{k-1} D_{k-1+\delta}) + (n_{k+\delta} - Q_{k-1} d_{k+\delta}) \beta^{-\delta}.$$

The $k$-th digit $q_k$ of the representation of the quotient is evaluated by a function $\Select$, function of the values of the auxiliary variable
$W_k$ and the interim representation $D_{k+\delta}$, so that
$$
    q_k=\Select (W_k, D_{k+\delta}) \in A \, .
$$

It can be shown that for any $k \geq 1$:
$$
    W_k = \beta^k (N_{k+\delta} - Q_{k-1} D_{k+\delta}) \, .
$$
Moreover, if the sequence $(W_k)$ is bounded, then $ q=\lim_{k\to \infty}{Q_k} = \frac{n}{d} \, . $

Conditions on the system $(\beta,A)$ so that the definition of the function $\Select$ ensures the correctness of the on-line division algorithm are given in~\cite{BrFrPeSv1, BrFrPeSv2}.

\subsection{Preprocessing of divisors}\label{prediv}

When making division, we need that the divisor stays away from $0$.
By definition of the on-line algorithm, this means that the value of all the prefixes of the divisor
$d_1d_2 \cdots$ must be greater in absolute value than a parameter $D>0$.

\begin{definition}\label{DefPrepro}
We say that  a complex numeration system $(\beta,A )$  \emph{allows preprocessing}
if
there exists  $D>0$ and a finite list $\mathcal{L}$ of identities of the type  $0\decdot w_{k}\cdots w_0 = 
0\decdot 0u_{k-1}\cdots u_0$ with digits in $A$ such that any string 
$d_1d_2\cdots$ on $A$ without prefix  $w_{k} \cdots w_0$ from $\mathcal{L}$ satisfies 
$| 0\decdot  d_1d_2 \cdots d_j | >D$ for all $j \in \mathbb{N}$. 
\end{definition}

We must have at least $d_1 \neq 0$ after preprocessing, so the preprocessing consists first of all in shifting the fractional
point to the most significant non-zero digit of the $(\beta,A )$-representation of the divisor.
Of course, after preprocessing the value of the original divisor $w$ has been changed
into a new one $d$ which is just a shift of the original one, that is to say $d=w \beta^k$ for some $k$ in $\Z$.
This will have to be taken into account to give the result of the division.

If  zero  has only the
trivial $(\beta,   A )$-representation the situation is simple. This fact  can be equivalently rewritten  as 
$$ \inf\mathcal{R} >0, \ \ \text{where } \ \  \mathcal{R}=\Bigl\{\bigl|  \sum_{i\geq 1}z_i\beta^{-i} \bigr|\  :\   z_1\neq 0, z_i \in   A \Bigr\}\,. $$
In this case the numeration system $(\beta,   A )$  allows preprocessing, since we can take  $D = \inf \mathcal{R}$ 
and the list of rewriting rules is empty. 

\begin{example}
If $\beta =4$  and $   A  = \{\overline{2},\overline{1},  0,1,2\}$, then zero has only the trivial representation and for $D$  one can take $\tfrac{1}{12} = \min \mathcal{R}$.
\end{example}

\begin{example}\label{binary}
 If $\beta=2$ and  $  A  = \{\overline{1},0,1\}$,  zero has two  non-trivial representations $0=0\decdot1\overline{1}\,\overline{1}\,\overline{1}\,\overline{1}\cdots = 0\decdot  \overline{1}\,1\,1\,1\,1\cdots $. Therefore,  preprocessing is a little bit  more sophisticated. Consider the list 
$$   0 \decdot\overline{1}1 =0\decdot 0\overline{1} \ \text{ and} \ \   0 \decdot 1\overline{1} =0\decdot 0 1$$ 
If a  string $d_1d_2\cdots$ has no  prefix  $\overline{1}1$ neither $1\overline{1}$, then
$$|0\decdot d_1d_2\cdots d_j|\geq 0\decdot  10\overline{1}\,\overline{1}\,\overline{1}\cdots = \tfrac14 $$
and thus one can take $D=\tfrac14$. 
\end{example}

\begin{theorem}\label{prepro} 
A complex numeration system $(\beta,A )$ allows preprocessing if and only if the spectrum $S_{{A}}(\beta)$ has no  accumulation point. 
\end{theorem}
The result is proved by the following three lemmas, in which we use the notation

$$H=\max\{|\sum_{i\geq 1}d_i\beta^{-i}|\  : \ d_i \in A \ \text{ for all } i \in \mathbb{N}  \}.$$

\begin{lemma}\label{1} If $0$ has a rigid $(\beta, \mathcal{A})$-representation then the  numeration system $(\beta, \mathcal{A})$   does not allow preprocessing.
\end{lemma}

\begin{proof}  Let  $0 = 0\decdot z_1z_2 z_3\cdots $  be a rigid representation of $0$.  Assume that preprocessing is possible with $D>0$. 
Find $j$ such that $ \tfrac{H}{|\beta|^{j}} < D$. Consider the number $0\decdot z_1z_2 z_3\cdots z_j000\cdots$. Since the representation of zero is rigid, no prefix of the string $z_1z_2 z_3\cdots z_j$ is contained in the list of the rewriting rules. But 
 $| 0\decdot z_1z_2 z_3\cdots z_j| = |0\decdot \underbrace{00\cdots 0}_{j-\text{times}} z_{j+1}z_{j+2}\cdots | < \tfrac{H}{|\beta|^{j}} <D$ --- a contradiction. 
\end{proof}

\begin{lemma}\label{0}
 Let us assume that  $S_{  A }(\beta)$ has no accumulation point and fix $K >0$.  Then there exists $m\in \mathbb{N}$  such that  any string $x_{m-1}x_{m-2}\cdots x_1x_0$ of length $m$ over $  A $ satisfies 
 either 
 $$  |x_{m-1}\beta^{m-1}+x_{m-2}\beta^{m-2}+\cdots +x_1\beta +x_0| \geq  K$$
 or there exists a string $y_{k-1}x_{k-2}\cdots y_1y_0$ of length $k<m$  over $  A $ such that
$$x_{m-1}\beta^{m-1}+x_{m-2}\beta^{m-2}+\cdots +x_1\beta +x_0 = y_{k-1}\beta^{k-1}+y_{k-2}\beta^{k-2}+\cdots +y_1\beta +y_0\,.$$
\end{lemma}

\begin{proof} Since  $S_{  A }(\beta)$ has no accumulation point, the set $P= \{ z \in  S_{  A }(\beta) : |z| < K\}$ is finite. Denote $m=1 + \max\{\rho(z) : z \in P\}$.   
Let $x =x_{m-1}\beta^{m-1}+x_{m-2}\beta^{m-2}+\cdots +x_1\beta +x_0$.  Obviously, $x \in S_{  A }(\beta)$.   Then either $|x| \geq K$ or $x \in P$ and thus $x =  y_{k}\beta^{k-1}+y_{k-2}\beta^{k-2}+\cdots +y_1\beta +y_0$, where $ k \leq \max\{\rho(z) : z \in P\} \leq m-1$. 
\end{proof}

\begin{lemma}\label{3} If  $S_{  A }(\beta)$ has no accumulation point, then there exists $D>0$  and $m \in \mathbb{N}$ such that  for all infinite strings $d_1d_2\cdots$ over $  A $ one has
\begin{enumerate}

\item either   \ \   $| 0\decdot d_1d_2 \cdots d_j| \geq D$ for all    $j \in \mathbb{N}$, 

\item or  \ \ $0\decdot d_1d_2 \cdots d_m= 0\decdot 0  d'_{2}  d'_{3}  \cdots  d'_{m} $ \  for some string  \ $ d'_{2}  d'_{3}  \cdots  d'_{m} \in   A ^*$. 

\end{enumerate}
\end{lemma}
\begin{proof} Let us take $\mu >0$ and  apply Lemma~\ref{0} with $K=H+\mu$ to get $m \in \mathbb{N}$. Denote  
$\mathcal{D} = \{|0\decdot d_1d_2\cdots d_j| : j< m \ \text{and \ } \    0\decdot d_1d_2\cdots d_j \neq 0\decdot 0 d'_2\cdots  d'_j\}$. The set  $\mathcal{D}$ is finite and does not contain   zero. Therefore,   $D'=\min \mathcal{D}>0$. 

To prove  the lemma, consider an infinite string  $d_1d_2\cdots$  and assume that $0\decdot d_1d_2 \cdots d_m\neq  0\decdot 0  d'_{2}  d'_{3}  \cdots  d'_{m} $ for all strings  \ $ d'_{2}  d'_{3}  \cdots  d'_{m} \in   A ^*$. We distinguish two cases
\begin{itemize}
 \item $j<m$, $j\in \mathbb{N}$.  Then $ 0\decdot d_1d_2 \cdots d_j \neq 0\decdot 0 d'_2\cdots  d'_j$, otherwise $0\decdot d_1d_2 \cdots d_m= 0\decdot 0  d'_{2}  d'_{3}  \cdots  d'_{j}d_{j+1}\cdots d_m $ ---  a contradiction. Therefore, $| 0\decdot d_1d_2 \cdots d_j | \geq    D'$.
     
 \item $j\geq m, j\in \mathbb{N}$. Then
$$  |0\decdot d_1d_2\cdots d_j| \geq |0\decdot d_1d_2\cdots d_m| - \tfrac{1}{|\beta|^m} |0\decdot d_{m+1}d_{m+2}\cdots d_{j} |\geq \tfrac{1}{|\beta|^m}  K - \tfrac{1}{|\beta|^m}   H = \frac{\mu}{|\beta|^m}$$

\end{itemize}

Thus we can set $D=\min\Bigl\{D', \frac{\mu}{|\beta|^m} \Bigr\}$.
\end{proof}

The previous lemma gives a hint for creating the list of rewriting rules. We take the index $m$ found by the lemma and 
inspect all strings $d_1d_2\cdots d_m$ over $A$. If 
 $0\decdot d_1d_2 \cdots d_m= 0\decdot 0  d'_{2}  d'_{3}  \cdots  d'_{m} $   for some  string $d'_{2}  d'_{3}  \cdots  d'_{m}$ we put it into the list. 
\begin{example}  Let $\beta = \varphi = \tfrac{1+\sqrt{5}}{2}$  and $  A  = \{\overline{1},0,1\}$. The minimal polynomial of $\varphi$ is  $X^2 - X -1$.  In this numeration system, $0$ has countably many finite representations and uncountably many infinite representations.  
As the alphabet is symmetric, the rewriting rules appear in pairs. For example, as $10\overline{1}$ can be rewritten to  $010$, also  $\overline{1}01$ can be rewritten to $0\overline{1}0$. To shorten our list, we  put into it only one rule of each  pair, namely the rule, where the first digit is $1$.   First we consider the list 

$\mathcal{L}_0:    \quad 10\overline{1} \longrightarrow  010\ ,\  \ 1\overline{1}0 \longrightarrow  001\ ,\  \
 1\overline{1}\overline{1}\longrightarrow  000 $.

\medskip

\noindent {\bf Claim:}  If no rule from $\mathcal{L}_0$ can be applied to the string $d_1d_2\cdots$, then
$|d|\geq  D= \tfrac{1}{\varphi^5}$, where  
$d=0\decdot d_1d_2\cdots $. 

\begin{proof}  WLOG $d_1 = 1$.  

If  $d_2 = 0$, then   $\geq 0$ and thus   $|d| \geq \tfrac{1}{\varphi} - \sum_{k\geq 4}^\infty\varphi^{-k} = \tfrac{1}{\varphi} - \tfrac{1}{\varphi^2} = \tfrac{1}{\varphi^3}\geq D\,.$

If  $d_2 = 1$, then
$|d|\geq \tfrac{1}{\varphi} + \tfrac{1}{\varphi^2}  -  \sum_{k\geq 3}^\infty\varphi^{-k} = 1 - \tfrac{1}{\varphi} -  = \tfrac{1}{\varphi^2}\geq D\,.$

If  $d_2 = \overline{1}$, then $d_3 = 1$.  Therefore,
$|d| \geq \tfrac{1}{\varphi} - \tfrac{1}{\varphi^2}  +\tfrac{1}{\varphi^3}  -  \sum_{k\geq 4}^\infty\varphi^{-k}  = \tfrac{1}{\varphi^5}\geq D\,.$
\end{proof}

\noindent  We can extend the list of rewriting rules to increase the lower bound $D$.   Let us consider the whole families of rules 
$\mathcal{L}$ :

$ (1\overline{1})^k 0 \longrightarrow  00(10)^{k-1}1$  \quad for $k\geq 1$.

$ (1\overline{1})^k \overline{1}\longrightarrow  00(10)^{k-1}0$ \quad   for $k\geq 1$.

$ (1\overline{1})^k 10\overline{1}\longrightarrow  01(00)^{k}0$ \quad  for $k\geq 0$.

$ (1\overline{1})^k 100\longrightarrow  01(00)^{k}1$ \quad  for $k\geq 0$.

$ (1\overline{1})^k 11\longrightarrow  01(00)^{k-1}10$ \quad  for $k\geq 1$.

$ 10\overline{1}^k 0\longrightarrow  0^{k+1}11$  \quad  for $k\geq 0$.

$ 10\overline{1}^k 1\longrightarrow  0^{k}101$ \quad  for $k\geq 1$.

\medskip

\noindent {\bf Claim:}  If no rule from $\mathcal{L}$ can be applied to the string $d_1d_2\cdots$, then
$|d|\geq  D=\tfrac{1}{\varphi^2}$, where  
$d=0\decdot d_1d_2\cdots $. 

\begin{proof}  WLOG $d_1 = 1$.  
Our string has a prefix  $11$ or a prefix $ (1\overline{1})^k101$  for $k\geq 0$. Therefore either 
$$|d| \geq 0\decdot  11 (\overline{1})^\omega = \tfrac1{\varphi^2}\qquad \text{or}
\qquad  |d|\geq 0\decdot  (1\overline{1})^k101 (\overline{1})^\omega = \tfrac1{\varphi^2}+ \tfrac1{\varphi^{2k+3}}.$$
\end{proof}
\end{example}

Some examples where the base is a complex number can be found in~\cite{BrFrPeSv2}.


\section{Comments and open questions}
\subsection{F-number}
In \cite{Lau1993} Lau defined for $1 < \beta < 2$ the following notion, that we extend to any $\beta>1$.

\begin{definition}
Let $\beta>1 $ and $B_\beta=\{-\lceil \beta \rceil +1, \ldots,\lceil \beta \rceil -1\}$ be the symmetrized alphabet of the canonical
alphabet $A_\beta$.
Then $\beta$ is said to be a \emph{F-number} if the set
$$L_{(\lceil \beta \rceil -1)} (\beta)=  S_ {B_\beta}(\beta) \cap \Big[-\frac{\lceil \beta \rceil -1}{\beta-1},\frac{\lceil \beta \rceil -1}{\beta-1}\Big]$$
is finite.
\end{definition}

Feng proved in \cite{Feng} that $1 < \beta < 2$ is a F-number if and only if it is a Pisot number.
This property extends readily to any $\beta>1 $. Another way of proving it consists in realizing that
the set of states $Q_{(\lceil \beta \rceil -1)}$ of the automaton for $Z_{\beta, \lceil \beta \rceil -1}$ is  included
into $L_{(\lceil \beta \rceil -1)}(\beta)$.

\subsection{Open questions} 

\begin{itemize}

\item A motivation for introducing the notion of ``rigid representation of zero" comes from on-line division in a numeration system $(\beta, A)$. A more elementary question is ``Has zero a non-trivial  $(\beta, A)$-representation"? The answer is easy  for real bases and  alphabets of the form $A=\{m, \ldots, 0,  \ldots, M \}$, see Proposition~\ref{non-sym}. 
The same question for complex bases is an open problem.

\item In the case that the base is real and the alphabet is $A=\{-M, \ldots,  M\}$, Theorem \ref{recog2} says that recognizability by a finite automaton is equivalent to the fact that the spectrum  $S_A(\beta)$ has no accumulation point.
  
An analogous result can be proved for complex bases as well. But for complex bases the question about the existence of accumulation points in the spectrum   $S_A(\beta)$ is not yet investigated. 
Nevertheless, it is often easy to check that a $(\beta, A)$-representation of $0$ is \textbf{not}
rigid.

\item  If $\beta >1$ is a non-Pisot base then $A=\{-\lceil \beta \rceil +1, \ldots, \lceil \beta \rceil -1\}$ is the smallest symmetric alphabet of consecutive integers for which the spectrum $S_A(\beta)$ has an accumulation point. What is the minimal size of an  alphabet $A=\{-M, \ldots,  M\}\subset \mathbb{Z}$  for which the spectrum of a non-Pisot complex number $\beta$ has an accumulation point? 
\end{itemize}


\section*{Acknowledgements}
Ch.~F. acknowledges support of ANR/FWF project ``FAN", grant ANR-12-IS01-0002.

\end{document}